\titleformat*{\section}{\normalsize \bfseries}
\newtheorem{theorem}{Theorem}
\newtheorem{lemma}{Lemma}
\renewenvironment{proof}[1][\proofname]{%
	\par\pushQED{\qed}\normalfont%
	\topsep6\p@\@plus6\p@\relax
	\trivlist\item[\hskip\labelsep\bfseries#1\@addpunct{.}]%
	\ignorespaces
}{%
	\popQED\endtrivlist\@endpefalse
}
\begin{document}
	
\title{Polynomial approximation of piecewise analytic functions on quasi-smooth arcs}
\author{Liudmyla Kryvonos}
\date{}
\maketitle 
\thispagestyle{empty}

\begin{abstract} 
	For a function $f$ that is piecewise analytic on a quasi-smooth arc  $\mathcal{L}$ and any $0<\sigma<1$ we construct a sequence of polynomials that converge at a rate $e^{-n^{\sigma}}$ at each point of analyticity of $f$ and are close to the best polynomial approximants on the whole $\mathcal{L}$. Moreover, we give examples when such polynomials can be constructed for $\sigma=1$.
\end{abstract}

 \textit{\small MSC:} {\small 30E10 }

 \textit{\small Keywords:} {\small Polynomial approximation, Quasi-smooth arcs, Near-best approximation.}

\section{Introduction and main results}

Let $\mathcal{L}$ be a quasi-smooth arc on the complex plane $\textbf{C}$, that is, for any $z$ , $\zeta \in \mathcal{L}$ the length $|\mathcal{L}(z, \zeta)|$  of the subarc $\mathcal{L}(z, \zeta)$ of $\mathcal{L}$ between points  $z$, $\zeta$ satisfies
$$
|\mathcal{L}(z, \zeta)| \leqslant c |z-\zeta|
$$
for some $c = c(\mathcal{L})\geqslant1$.

Consider a piecewise analytic function $f$ on $\mathcal{L}$ belonging to 
$C^{k}(\mathcal{L})$, $k \geq 0$, that means $f$ is $k$ times continuously differentiable on $\mathcal{L}$ and there exist 
points 
$
z_{2}, z_{3},...,z_{m-1}
$
such that $f$ is analytic on $\mathcal{L} \backslash \{z_{1}, z_{2},...,z_{m}\}$, ($z_{1}, z_{m}$ -- endpoints of $\mathcal{L}$), but is not 
analytic at points $z_{1}, z_{2},...,z_{m}$.
We call the $z_{i}$ points of singularity of $f$.
 
The rate of the best uniform approximation of a function $f$ by polynomials of degree  at most $n \in \mathbb{N} := \{1,2,...\}$ is denoted by
$$
E_{n}(f) = E_{n}(f, \mathcal{L}) := \underset{P_{n}: deg P_{n} \leq n}{inf}\|f - P_{n}\|_{\mathcal{L}}.                                                         \eqno(1.1)
$$
Here $\|\cdotp\|_{\mathcal{L}}$ means the supremum norm over $\mathcal{L}$. 
Also, let $p_{n}^{*}(f,z)$ be the (unique) polynomial minimizing the uniform norm in (1.1). 

It is natural to expect the difference $f(z) - p_{n}^{*}(z)$ to converge faster at points of analyticity of $f$. But, it turns out, singularities of $f$ adversely affect the behavior over the whole $\mathcal{L}$ of a subsequence of the best polynomial approximants $p_{n}^{*}(f, z)$. This so-called "principle of contamination" manifests itself in density of extreme points of $f - p_{n}^{*}$, discussed by A. Kroo$^{'}$ and E.B. Saff in \cite{Kroo} and accumulation of zeros of $p_{n}^{*}(f, z)$, showed by H.-P. Blatt and E.B. Saff in \cite{Blatt}.
For more details, we refer the reader to \cite{Saff}.
 
 Surprisingly, such behavior of zeros and extreme points need not hold for polynomials of "near-best" approximation, that is for polynomials $P_{n}$ that satisfy
$$
\|f - P_{n}\|_{\mathcal{L}} \leq C E_{n}(f), \qquad n=1,2,...,
$$  
with a fixed $C>1$.
Hence, it is natural to seek "near-best" polynomials which would converge faster at points $z \in \mathcal{L} \backslash \{z_{1}, z_{2},...,z_{m}\}$.

For the case of $\mathcal{L} = [-1,1]$ and a piecewise analytic function $f$ belonging to $C^{k}[-1,1]$, E.B. Saff and V. Totik in \cite{AppPieceFunct} have proved that if 
 non-negative numbers $\alpha, \beta$ satisfy $\alpha < 1$ and $\beta \geqslant \alpha$ or $\alpha=1$ and $\beta > 1$, then there exist constants $c$, $C>0$ and polynomials $P_{n}$, $n=1,2,...,$ such that for every $x\in[-1,1]$
$$
|f(x) - P_{n}(x)| \leqslant C E_{n}(f) e^{-c n^{\alpha} d(x)^{\beta}},         \eqno(1.2)
$$
where $d(x)$ denotes the distance from $x$ to the nearest singularity of $f$ in $(-1,1)$. 

Accordingly, the question of constructing "near-best" polynomials arises when $[-1,1]$ is replaced by an arbitrary quasi-smooth arc $\mathcal{L}$ in $\textbf{C}$. Polynomial approximation of functions on arcs is an important case of a more general problem of approximation of functions on an arbitrary continuum of the complex plane studied in the works of N.A. Shirokov \cite{ShirokovDocl}, V.K. Dzjadyk and G.A. Alibekov \cite{Alibekov}, V.V. Andrievskii \cite{AppOnArcs} and others (see, for example, \cite{Dzyadyk}).

The behavior of "near-best" polynomials is well studied in the case of approximation on compact sets $K$ with non-empty interior $Int(K)$. The following results demonstrate how the possible rate of convergence inside $K$ depends on the geometry of $K$. V.V. Maimeskul have proved in \cite{Maimeskul} that if $\Omega:=\overline{C}\setminus K$ satisfies the $\alpha$-wedge condition with $0<\alpha \leqslant 1$, then for any $\sigma < \alpha/2$ there exist "near-best" polynomials converging at a rate $e^{-n^{\sigma}}$ in the interior of $K$. E.B. Saff and V.Totik in \cite{BehaviorBestUnApp} show the possibility of geometric convergence of "near-best" polynomials inside $K$ if the boundary of $K$ is an analytic curve. 
Meantime, N.A. Shirokov and V. Totik in \cite{AppOnTheBoundAndInside} discuss the rate of approximation by "near best" polynomials of a function $f$ given on a compact set $K$ with a generalized external angle smaller than $\pi$ at some point $z_{0}\in \partial K$. They showed that if $f$ has a singularity at $z_{0}$, then geometric convergence inside $K$, where $f$ is analytic, is impossible.
Taking into account these results, the most interesting case for us is when singularities of the function $f$ occur at points where the angle between subarcs of $\mathcal{L}$ is different from $\pi$. It turns out that for some such arcs there are no restrictions on the rate of convergence of "near-best" polynomials and it can be geometric at points where $f$ is analytic,  as opposed to the result for compact sets with non-empty interior. We formulate and prove this assertion in Theorem 2.
Furthermore, the general case is given by the following

\begin{theorem}

Let $f$  be a piecewise analytic function on a quasi-smooth arc $\mathcal{L}$, i.e. there exist points $z_{2}, ... , z_{m-1} \in  \mathcal{L}$, such that they divide $\mathcal{L}$ into $\mathcal{L}^{1}, \mathcal{L}^{2}, ... , \mathcal{L}^{m-1}$ and 
$$
f(z) = f_{i}(z),   \: z \; \in \mathcal{L} ^{i},   \quad  i=\overline{1,m-1},   \eqno(1.3)
$$
where $f_{i}(z)$ are analytic in some neighborhood of $\mathcal{L} ^{i}$, respectively, and satisfy
$$
f_{i-1}^{(r)}(z_{i})=f_{i}^{(r)}(z_{i}) , \quad f_{i-1}^{(k_{i})}(z_{i}) \neq f_{i}^{(k_{i})}(z_{i})   \eqno(1.4)
$$
for $r=\overline{0,k_{i}},\; i=\overline{2,m-1}$.
Then, for any $0 < \sigma < 1$, there exists a sequence  $\{P_{n}\}_{1}^{\infty}$ of "near-best" polynomial approximants of $f$ on $\mathcal{L}$, such that 
$$
\underset{n \longrightarrow \infty}{lim} \|f - P_{n}\|_{E} \; e^{n^{\sigma}} = 0       \eqno(1.5)
$$
holds for any compact set $E \subset \mathcal{L} \backslash \{z_{2},...,z_{m-1}\}$. 
\end{theorem}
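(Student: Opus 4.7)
The approach follows the general scheme of Saff and Totik \cite{AppPieceFunct} for the interval $[-1,1]$, with the explicit trigonometric kernels used there replaced by Dzyadyk-Andrievskii type polynomial kernels associated to the quasi-smooth arc $\mathcal{L}$. My plan is to first reduce to a single-singularity problem, then construct polynomial cut-offs that play the role of characteristic functions of subarcs, and finally combine them with near-best approximants of the analytic pieces.

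For the reduction, let $\tilde{f}_i$ denote an analytic extension of $f_i$ to an open neighborhood of $\mathcal{L}^i$. A standard telescoping argument yields a decomposition
$$
f(z) = \tilde{f}_{1}(z) + \sum_{i=2}^{m-1} g_{i}(z), \qquad z \in \mathcal{L},
$$
where $\tilde{f}_1$ is analytic in a neighborhood of $\mathcal{L}$ and, for each $i\geq 2$, $g_i(z)$ equals the analytic function $\tilde{f}_i(z) - \tilde{f}_{i-1}(z)$ on $\mathcal{L}^i \cup \cdots \cup \mathcal{L}^{m-1}$ and vanishes elsewhere on $\mathcal{L}$. Condition (1.4) implies that $\tilde{f}_i - \tilde{f}_{i-1}$ has a zero of order $k_i$ at $z_i$, so $g_i$ behaves near $z_i$ like $(z-z_i)^{k_i}$ times an analytic factor times a cut-off, and its only singular point on $\mathcal{L}$ is $z_i$. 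Since $\tilde{f}_1$ admits polynomial approximants of geometric accuracy on $\mathcal{L}$ by classical Dzyadyk-Andrievskii results (see \cite{Dzyadyk,AppOnArcs}), it suffices to build, for each $i$, a near-best polynomial approximant of $g_i$ on $\mathcal{L}$ whose error decays faster than $e^{-n^{\sigma}}$ on compact subsets disjoint from $z_i$.

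The main technical ingredient I would prove is a polynomial cut-off lemma: for any $0 < \sigma < 1$ there exist polynomials $K_{n,i}$ of degree $\leq n/2$, uniformly bounded on $\mathcal{L}$, with $|K_{n,i}(z) - 1| = O(e^{-c n^{\sigma}})$ for $z$ in any compact subset of $(\mathcal{L}^{i} \cup \cdots \cup \mathcal{L}^{m-1}) \setminus \{z_{i}\}$ and $|K_{n,i}(z)| = O(e^{-c n^{\sigma}})$ for $z$ in any compact subset of $(\mathcal{L}^{1} \cup \cdots \cup \mathcal{L}^{i-1}) \setminus \{z_{i}\}$. I would construct these using the conformal map $\Phi$ from $\overline{\mathbf{C}}\setminus\mathcal{L}$ onto the exterior of the unit disk and Faber- or Dzyadyk-type kernels whose off-$z_i$ decay is controlled by $|\Phi(z)|$, taken to a power of order $n^{\sigma}$ so that the subexponential rate fits inside the degree budget. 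Given such $K_{n,i}$, I would approximate the analytic factor $(z-z_i)^{k_i}h_i(z)$ of $g_i$ by a near-best polynomial $H_{n,i}$ of degree $\leq n/2$ on $\mathcal{L}$, and set
$$
P_{n}(z) := \tilde{P}_{n}(z) + \sum_{i=2}^{m-1} K_{n,i}(z) H_{n,i}(z),
$$
with $\tilde{P}_n$ a near-best geometric polynomial approximant of $\tilde{f}_1$. The bound $\|f - P_{n}\|_{\mathcal{L}} \leq C E_{n}(f)$ would follow from the uniform boundedness of the $K_{n,i}$ and the factorization of $g_i$, while (1.5) on a compact $E \subset \mathcal{L} \setminus \{z_{2}, \ldots, z_{m-1}\}$ would follow from the decay of the $K_{n,i}$ together with the geometric convergence of $H_{n,i}$ and $\tilde{P}_n$.

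The principal obstacle is the construction of the cut-offs $K_{n,i}$ on a general quasi-smooth arc with the prescribed subexponential decay. On $[-1,1]$ these can be taken as explicit products of Jackson or de la Vall\'ee Poussin kernels, but on a quasi-smooth arc one must instead work through the conformal map $\Phi$, whose boundary regularity is only H\"older/Lipschitz. Quantitatively matching the decay of the Dzyadyk kernels against arc-length distance to $z_i$ while preserving near-best behavior in $\|\cdot\|_{\mathcal{L}}$ is where the bulk of the work will lie. The restriction $\sigma < 1$ arises precisely because Lipschitz-type distortion estimates for $\Phi$ do not permit a fully geometric $e^{-cn}$ rate for arbitrary quasi-smooth arcs; Theorem 2 (announced in the introduction) will address the special arcs for which $\sigma = 1$ is attainable.
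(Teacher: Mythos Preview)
Your overall plan and the paper's diverge at the first step. The paper does not telescope $f$ into jump functions on $\mathcal{L}$ and then multiply by polynomial characteristic functions; instead it writes each singular contribution as a Cauchy integral
\[
h^{j}_{1}(z)=\frac{1}{2\pi i}\int_{\Gamma_{j}^{1}\cup\Gamma_{j}^{2}}\frac{f_{j-1}(\zeta)-f_{j}(\zeta)}{\zeta-z}\,d\zeta,
\]
with $\Gamma_{j}^{1},\Gamma_{j}^{2}$ short arcs issuing from $z_{j}$ into $\Omega=\overline{\mathbf{C}}\setminus\mathcal{L}$ (images under $\Psi$ of radial segments). The approximant is obtained by replacing $1/(\zeta-z)$ by a damped Dzyadyk kernel
\[
\widehat P(z,\zeta)=\frac{1-M(z,\zeta)}{\zeta-z}+M(z,\zeta)\,K_{[n/2]}(z,\zeta),
\]
where the multiplier $M$ is uniformly bounded for $(z,\zeta)\in\mathcal{L}\times\Gamma_{j}^{i}$ and satisfies $|M|\le q^{cn^{1-\beta}}$ for $z$ in any compact away from $z_{j}$. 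The construction of $M$ is the heart of the argument and is quite different from what you propose: one first builds a Lipschitz map $F$ sending $\mathcal{L}\cup\Gamma_{j}^{i}$ onto two line segments meeting at angle $2\pi/k$ with $\Gamma_{j}^{i}$ going to the bisector; then approximates $F$ by polynomials $Q_{n}$ at rate $n^{-\alpha}$ via Andrievskii's direct theorem for Lipschitz functions on continua; and finally sets
\[
M(z,\zeta)=\Bigl(\tfrac{Q_{[n^{\beta}]}^{\,k}(z)-\zeta_{0}}{Q_{[n^{\beta}]}^{\,k}(\zeta)-\zeta_{0}}\Bigr)^{[n^{1-\beta}/2k]}.
\]
The $k$-th power sends both images of $\mathcal{L}$ near the positive real axis and the image of $\Gamma_{j}^{i}$ near the negative axis, so the ratio is $\le 1+O(n^{-\alpha\beta k})$ uniformly and strictly $<1$ away from $z_{j}$; choosing $\beta$ with $\sigma<1-\beta$ and $1-\beta<\alpha\beta k$ yields (1.5). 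The near-best estimate then comes from the Dzyadyk kernel $K_{[n/2]}$ exactly as in Lemma~1 and Theorem~3, with $M$ merely a bounded factor.

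Your programme is closer in spirit to the original Saff--Totik argument on $[-1,1]$, and it is not unreasonable, but the key cut-off step has a real gap as written. You propose to build $K_{n,i}$ so that its decay away from $z_{i}$ is ``controlled by $|\Phi(z)|$'', yet $|\Phi(z)|\equiv 1$ on $\mathcal{L}$, so this quantity cannot separate the two subarcs meeting at $z_{i}$; neither Faber nor Dzyadyk kernels furnish, directly from $\Phi$, a polynomial that is $\approx 1$ on one side of $z_{i}$ and $\approx 0$ on the other with transition zone of width $\rho^{*}_{1/n}(z_{i})$ and tail $e^{-cn^{\sigma}}$. Some extra geometric device---such as the paper's Lipschitz straightening of $\mathcal{L}\cup\Gamma$ followed by polynomial approximation of that map---is needed, and you have not supplied one. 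A secondary issue: in your telescoping, $\tilde f_{i}-\tilde f_{i-1}$ is analytic only in a neighborhood of $z_{i}$, not on all of $\mathcal{L}^{i}\cup\cdots\cup\mathcal{L}^{m-1}$, so $g_{i}$ and the global ``near-best'' approximant $H_{n,i}$ are not well defined as stated; this is repairable, but the cut-off construction is where the proof actually lives.
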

 On the complex plane consider lemniscates that are level lines of some complex polynomials. Namely, take $P(z)=P_{N}(z):=(z-a_{1})(z-a_{2})...(z-a_{N})$, where $a_{k}=Re^{i \frac{ 2 \pi (k-1)}{N}}$, $k=\overline{1,N}$ and $R>0$ is a fixed number. Then $|P(z)|=R^{N}$ is an equation of a lemniscate. Note that the origin is a point of this lemniscate (since |$P(0)|=R^{N}$).
 
 The lemniscate divides the plane into three parts, namely the curve itself, points $\{z: |P(z)|<R^{N}\}$ and $\{z: |P(z)|>R^{N}\}$.
 Consider an arc $\mathcal{L}=\mathcal{L'} \cup \mathcal{L''}$, where  $\mathcal{L'}, \mathcal{L''}$ may belong to different petals of the lemniscate, meet at the origin and satisfy $|P(z)|<R^{N}$,  $ z \in \mathcal{L} \setminus \{0\}$.
 An example for $N=4$, $R=1$ you can see below.
 
 \begin{figure}[h!]
 	\center{\includegraphics[scale=0.6]{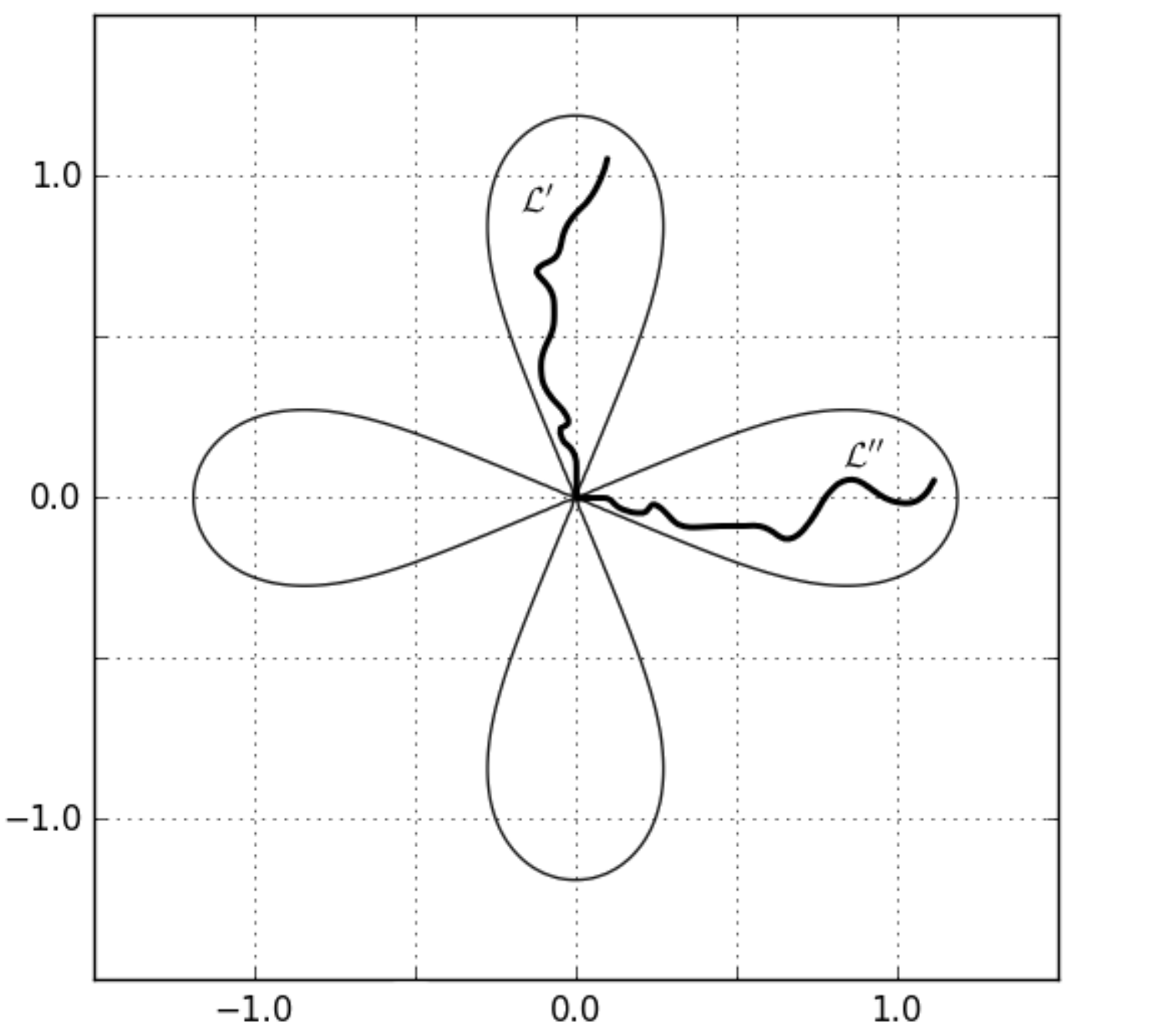}} 
 \end{figure}
 
 In particular, two line segments meeting at the origin at angle $0< \varphi \leqslant \pi$ satisfy this property: if $\frac{2 \pi}{m+1}<\varphi \leqslant \frac{2 \pi}{m}$ for some integer $m$, it is enough to take $R$ to be sufficiently large and $N=m$.

 Let $f$ be a piecewise analytic function on $\mathcal{L}$ given by
 $$ 
 f(z)=\begin{cases}                                                              
 f_{1}(z),&\text{if $z$ $\in \mathcal{L} ^{'}$}\\                             
 f_{2}(z),&\text{if $z$ $\in \mathcal{L} ^{''}$}
 \end{cases}                                                                   
 $$
 where $f_{1}$, $f_{2}$ are functions, analytic on $\mathcal{L} ^{'}$ and $\mathcal{L} ^{''}$ correspondingly, satisfying
 $$
 f_{1}^{(r)}(0)=f_{2}^{(r)}(0)  , \; r=\overline{0,k},    \quad f_{1}^{(k+1)}(0)\neq f_{2}^{(k+1)}(0).      
 $$
 With these assumptions we prove the following result

\begin{theorem} 
Let $\mathcal{L}$ and $f$ be as above.
Then there exist a constant $c>0$ and a sequence of "near-best" polynomials $\{P_{n}\}_{1}^{\infty}$,
such that
$$
\underset{n \longrightarrow \infty}{lim} \|f - P_{n}\|_{E} \; e^{c n d(E)} = 0,
$$
where $d(E)>0$ for any compact set $E \subset \mathcal{L}\setminus \{0\}$.
\end{theorem}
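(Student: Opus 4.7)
The key geometric ingredient is the polynomial $Q(z) := 1 - z^N/R^N = -P(z)/R^N$, which satisfies $Q(0) = 1$ and $|Q(z)| < 1$ for $z \in \mathcal{L}\setminus\{0\}$. Expanding $|Q(z)|^2 = 1 - 2\operatorname{Re}(z^N)/R^N + |z|^{2N}/R^{2N}$ and using that, on each petal meeting the arc, $\operatorname{Re}(z^N) \geq c_0 |z|^N$, one obtains the bound $|Q(z)|^n \leq e^{-cn|z|^N}$ on $\mathcal{L}$. By the quasi-smoothness of $\mathcal{L}$, $|z|$ and the arc-length distance from $z$ to the origin are comparable, so on any compact $E \subset \mathcal{L}\setminus\{0\}$ the polynomial $Q^n$ decays geometrically at a rate controlled by $d(E)$, delivering the required $e^{-cn d(E)}$-factor.

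The construction proceeds in two stages. First, because each $f_i$ is analytic in a neighborhood of the quasi-smooth arc $\mathcal{L}^i$, standard constructive polynomial approximation theory for arcs (Andrievskii, Dzyadyk) supplies polynomials $p_n^{(i)}$ of degree $\leq n$ with $\|f_i - p_n^{(i)}\|_{\mathcal{L}^i} \leq C q^n$ for some $q \in (0,1)$. Setting $\Delta_n := p_n^{(2)} - p_n^{(1)}$, I would note that since $f_1, f_2$ are both analytic in a disk at the origin and agree to order $k$, $\Delta_n(z) = (f_2 - f_1)(z) + O(q^n) = z^{k+1} h_n(z) + O(q^n)$ for $z$ near $0$, with $h_n$ uniformly bounded.

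Second, I would build a polynomial $L_n$ of degree $O(n)$ acting as a polynomial switch between the sub-arcs: $|L_n(z)| \leq C|Q(z)|^n$ on $\mathcal{L}'$ and $|1 - L_n(z)| \leq C|Q(z)|^n$ on $\mathcal{L}''$, with $\|L_n\|_{\mathcal{L}} \leq C$. A natural candidate arises from Dzyadyk-type polynomial kernels adapted to the lemniscate geometry, or equivalently from a discretized Cauchy integral along a contour in the complement of $\mathcal{L}$ that separates the petals containing $\mathcal{L}'$ and $\mathcal{L}''$, multiplied by an appropriate power of $Q$ to furnish the decay. The approximant is then
$$
P_n(z) := p_n^{(1)}(z) + \Delta_n(z)\, L_n(z),
$$
and on a compact $E \subset \mathcal{L}\setminus\{0\}$ the factor $L_n$ or $1 - L_n$ (depending on which sub-arc contains $E$) supplies the geometric decay.

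The principal obstacle is the near-best estimate on all of $\mathcal{L}$. Since the characteristic function $\chi_{\mathcal{L}''}$ is discontinuous at the origin, any polynomial $L_n$ is necessarily $O(1)$ rather than small in a neighborhood of $0$. The key compensating fact is the order-$(k+1)$ vanishing of $\Delta_n$ at $0$: $|\Delta_n(z) L_n(z)| \leq C|z|^{k+1}$ in this neighborhood, which is comparable to $E_n(f)$ once the neighborhood has the correct scale in terms of $n$. Matching the scales properly — and verifying that the geometric estimates on $L_n$ (or $1-L_n$) persist on $\mathcal{L}$ outside this neighborhood, so that both the near-best property on $\mathcal{L}$ and the geometric rate on $E$ hold simultaneously — is where the technical core of the proof lies.
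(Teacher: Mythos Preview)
Your identification of the key geometric fact --- that $|P(z)/R^N|<1$ on $\mathcal{L}\setminus\{0\}$ with equality only at the origin --- is correct and is exactly what the paper exploits. But what you have written is a sketch, not a proof: the switch polynomial $L_n$ is never actually constructed (you only point toward ``Dzyadyk-type kernels'' or ``a discretized Cauchy integral''), and you explicitly label the near-best estimate as ``the technical core'' while leaving it undone. There is moreover a concrete obstacle you have not confronted. Your $\Delta_n = p_n^{(2)}-p_n^{(1)}$ is controlled only where \emph{both} $p_n^{(i)}$ are close to their respective $f_i$; on $\mathcal{L}'$ away from $0$ you have no bound on $p_n^{(2)}$ beyond Bernstein--Walsh, which permits growth of order $e^{c'n}$, and nothing in your setup guarantees that the factor $|Q|^n$ from $L_n$ decays fast enough to kill this. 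Even the local claim $\Delta_n(z)=z^{k+1}h_n(z)+O(q^n)$ near $0$ is unjustified: it requires $p_n^{(2)}\approx f_2$ on the $\mathcal{L}'$-side of $0$, which your hypothesis on $\|f_2-p_n^{(2)}\|_{\mathcal{L}''}$ does not give (smallness of an analytic function on an arc does not propagate to a neighborhood).

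The paper takes a different and cleaner route that sidesteps these issues entirely. It first splits $f=h_1+h_2$ with $h_2$ analytic on all of $\mathcal{L}$ (hence approximable at geometric rate trivially) and
\[
h_1(z)=\frac{1}{2\pi i}\int_{\Gamma^1\cup\Gamma^2}\frac{f_1(\zeta)-f_2(\zeta)}{\zeta-z}\,d\zeta,
\]
where the contours $\Gamma^j$ emanate from $0$ into the region $\{|P|>R^N\}$ \emph{outside} the lemniscate. It then replaces the Cauchy kernel by the explicit polynomial kernel
\[
\widehat{P}_n(z,\zeta)=\frac{1-\bigl(P(z)/P(\zeta)\bigr)^{[n/(2N)]}}{\zeta-z}+\Bigl(\frac{P(z)}{P(\zeta)}\Bigr)^{[n/(2N)]}K_{[n/2]}(z,\zeta),
\]
interpolating between $1/(\zeta-z)$ and a Dzyadyk kernel. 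Since $|P(z)|\le R^N\le|P(\zeta)|$ for $z\in\mathcal{L}$ and $\zeta\in\Gamma^1\cup\Gamma^2$, the weight $\bigl(P(z)/P(\zeta)\bigr)^{[n/(2N)]}$ is automatically $\le 1$, so the near-best bound $\|h_1-P_n\|_{\mathcal{L}}\preccurlyeq[\rho^*_{1/n}(0)]^{k+1}\asymp E_n(f)$ follows immediately from the kernel estimate of Lemma~1 and Theorem~3. On a compact $E\subset\mathcal{L}\setminus\{0\}$ one has $\max_E|P|<R^N$, so the same weight is $\le q^{[n/(2N)]}$ for some $q<1$, giving the geometric rate $e^{-cn\,d(E)}$ with $d(E)=\min_{z\in E}(1-|P(z)|)$. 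No separate switch construction and no global control of a difference of one-sided approximants is required.
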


\section{Auxiliary results}

   In this section we give some results which allow us to get estimates for the $E_{n}(f)$ and are needed for constructing "near-best" polynomials. 

For $a>0$ and $b>0$ we will use the notation $a\preccurlyeq b$ if $a\leqslant cb$, with some constant $c>0$. The expression $a\asymp b$ means $a\preccurlyeq b$ and $b\preccurlyeq a$.

Let $\mathcal{L}$ be a quasi-smooth arc and $\Omega := \overline{\textbf{C}}\backslash  \mathcal{L}$.
Consider a conformal mapping
$\Phi: \Omega \longrightarrow \Delta := \{\omega: |\omega|>1\}$, normalized in such a way that $\Phi(\infty)=\infty$, $\Phi'(\infty)>0$,
and denote $\Psi:= \Phi^{-1}$.

By $\widetilde{\Omega}$ we denote compactification of the domain  $\Omega$ by prime ends in the Caratheodory sense, and  $\widetilde{\mathcal{L}}$ := $\widetilde{\Omega}\setminus \Omega$.
For the endpoints $z_{1}, z_{2}$ of $\mathcal{L}$ and $u>0, j=1,2$,  let
$$\Phi(z_{j}):=\tau_{j};
$$ $$
\Delta_{1}:=\{\tau: \tau \in \Delta, arg \: \tau_{1} < arg \:\tau < arg \: \tau_{2}\};
$$ $$
\Delta_{2}:=\Delta\backslash\overline{\Delta}_{1}, \;
\widetilde{\Omega}^{j}:=\Psi(\overline{\Delta}_{j}), \; \Omega^{j}:=\Psi(\Delta_{j}); \;
$$
$$
\widetilde{\mathcal{L}}^{j}:=\widetilde{\Omega}^{j} \cap \widetilde{\mathcal{L}};
$$ $$
\mathcal{L}^{j}_{u}:=\{\zeta: \zeta \in \widetilde{\Omega}^{j}, |\Phi(\zeta)|=1+u\};
$$ $$
\rho^{j}_{u}(z):=dist (z,\mathcal{L}^{j}_{u}); \;
\rho^{*}_{u}(z):=\underset{j=1,2}{max} \:\rho^{j}_{u}(z).
$$

Let $z_{0}$ be a point of $\mathcal{L}$, distinct from endpoints of the arc. Then point $z_{0}$ divides $\mathcal{L}$ into two parts, $\mathcal{L'}$ and $\mathcal{L''}$.
Consider the function 
$$ 
f(z)=\begin{cases}                                                              
f_{1}(z),&\text{if $z$ $\in \mathcal{L} ^{'}$}\\                             
f_{2}(z),&\text{if $z$ $\in \mathcal{L} ^{''}$}
\end{cases}                                                                   \eqno(2.1)
$$
where $f_{1}$, $f_{2}$ are functions, analytic on $\mathcal{L} ^{'}$ and $\mathcal{L} ^{''}$, i.e. analytic in some neighborhoods of $\mathcal{L} ^{'}$ and $\mathcal{L} ^{''}$ correspondingly, and satisfying
$$
f_{1}^{(r)}(z_{0})=f_{2}^{(r)}(z_{0})  , \; r=\overline{0,k},    \quad f_{1}^{(k+1)}(z_{0})\neq f_{2}^{(k+1)}(z_{0}).        \eqno(2.2)
$$

By $U$ we will denote an open circular neighborhood of the point $z_{0}$, where both $f_{1}$, $f_{2}$ are analytic.

Let $Z_{0}^{1}$, $Z_{0}^{2}$ $ \in \widetilde{\mathcal{L}}$ be the prime ends, s.t.  $|Z_{0}^{j}|=z_{0}, \; j=1,2$. Set
$$
\tau_{0}^{j}:=\Phi(Z_{0}^{j}), \quad j=1,2.
$$

Points $\tau_{1}^{j}, j=1,2$ we define by
$$
\tau_{1}^{j}=\lambda \tau_{0}^{j},
$$
with $\lambda>1$ such that
 $$ 
 Г^{1}, Г^{2} \subset U,
 $$
where
$$
Г^{j} = Г^{j}_{0}:=\{\zeta:  1<|\Phi(\zeta)|<\lambda, arg \; \Phi(\zeta)=arg\; \tau_{0}^{j}\}, \; j=1,2.                                                \eqno(2.3)
$$

The arcs $Г^{1},Г^{2}$ are rectifiable (see \cite[Chap. 5]{ConstrTheory}), thus, can be oriented in such a way that for all $z \in \mathcal{L}\backslash\{z_{0}\}$
function $f$ can be represented, by the Cauchy formula, as
$$
f(z) = h_{1}(z) + h_{2}(z),                                          
$$
where
$$
h_{1}(z)=\frac{1}{2\pi i} 
\int\limits_{Г^{1} \cup Г^{2}} \frac{f_{1}(\zeta)-f_{2}(\zeta)}{\zeta-z} d\zeta,  \eqno(2.4) 
$$
and $h_{2}(z)$ is analytic for all $z \in \mathcal{L}$, therefore it can be approximated with a geometric rate on $\mathcal{L}$.

We will make use of the following lemma. 

\begin{lemma}
Let $\mathcal{L}$ be a quasi-smooth arc. Then for any fixed non-negative integer $k$,  a positive integer $n$ and $\zeta$ $\in$ $Г^{1} \cup Г^{2}$ there exists a polynomial kernel $K_{n}(\zeta,z)$ of the form
$K_{n}(\zeta,z)=\displaystyle\sum_{j=0}^{n}a_{j}(\zeta)z^{j}$ with continuous in $\zeta$ 
coefficients $a_{j}$($\zeta$), $j=\overline{0,n}$,
satisfying for z$\; \in\mathcal{L}$ and $\zeta$ with $|\zeta - z_{0}| \geq        \rho_{1/n}^{*}(z_{0})$
$$	
|\frac{1}{\zeta-z} - K_{n}(\zeta,z)| \leq c [\rho_{1/n}^{*}(z_{0})]^{k+2} |\zeta - z_{0}|^{-(k+3)} ,  \eqno(2.5)
$$
where $c=c(\mathcal{L})>0$.
\end{lemma}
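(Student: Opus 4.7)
The kernel $K_n(\zeta,z)$ is a Dzyadyk-type polynomial kernel on the quasi-smooth arc $\mathcal{L}$. The plan is to adapt the standard Dzyadyk-Belyi-Andrievskii construction (compare \cite[Ch.~III]{Dzyadyk} and \cite{AppOnArcs}) to our setting. The non-standard feature of (2.5) is that its right-hand side is expressed in terms of the \emph{fixed} reference point $z_0$ rather than the moving point $z$, as is usual; this is appropriate because in the intended application (formula (2.4)) $\zeta$ is confined to the short arcs $Г^{1}\cup Г^{2}$ emanating from the prime-end images of $z_0$, so $|\zeta-z_0|$ is the natural small parameter on the $\zeta$-side.

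I would proceed in three steps. First, I would use the Warschawski-Belyi-Lesley conformal-distortion estimates for quasi-smooth arcs to translate the hypothesis $|\zeta-z_0|\ge\rho_{1/n}^{*}(z_0)$ into the conformal-image condition $|\Phi(\zeta)|-1\gtrsim 1/n$, which places $\zeta$ outside the $1/n$-level curve and makes geometric-type expansions in $1/\Phi(\zeta)$ converge at the appropriate rate. Second, I would construct a Dzyadyk localizing polynomial $\phi_n(z)$ built from Faber polynomials of $\mathcal{L}$, normalized so that $\phi_n(z_0)=1$, $\|\phi_n\|_{\mathcal{L}}\le c$, and $|\phi_n(z)|\le c\bigl(\rho_{1/n}^{*}(z_0)/|z-z_0|\bigr)^{k+2}$ as soon as $z\in\mathcal{L}$ with $|z-z_0|\ge\rho_{1/n}^{*}(z_0)$, and combine $\phi_n$ with the order-$(k+1)$ Taylor polynomial of $1/(\zeta-z)$ at $z=z_0$ to produce
\[
K_n(\zeta,z)=\sum_{j=0}^{k+1}\frac{(z-z_0)^j}{(\zeta-z_0)^{j+1}}+\frac{(z-z_0)^{k+2}}{(\zeta-z_0)^{k+2}}\cdot\frac{\phi_n(\zeta)-\phi_n(z)}{\phi_n(\zeta)\,(\zeta-z)},
\]
a polynomial of degree at most $n$ in $z$ whose coefficients $a_j(\zeta)$ are continuous in $\zeta$ wherever $\phi_n(\zeta)\ne 0$. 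A direct computation using the closed form $1/(\zeta-z)-T_{k+1}(\zeta,z)=(z-z_0)^{k+2}/[(\zeta-z_0)^{k+2}(\zeta-z)]$ yields the clean identity
\[
\frac{1}{\zeta-z}-K_n(\zeta,z)=\frac{(z-z_0)^{k+2}\,\phi_n(z)}{\phi_n(\zeta)\,(\zeta-z_0)^{k+2}(\zeta-z)}.
\]
Third, (2.5) would be verified by splitting $\mathcal{L}$ into the near region $\{|z-z_0|\le C\rho_{1/n}^{*}(z_0)\}$ and its complement. In the near region, $|\zeta-z|\asymp|\zeta-z_0|$, $|\phi_n(z)|\le c$, and $|z-z_0|^{k+2}\le(C\rho_{1/n}^{*}(z_0))^{k+2}$ give the desired bound directly. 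In the far region, the decay of $|\phi_n(z)|$ combined with the quasi-smoothness estimate $|\zeta-z|\gtrsim|z-z_0|$ cancels the $|z-z_0|^{k+2}$ in the numerator and again reduces to the stated bound.

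The main obstacle is the simultaneous control of $\phi_n$ on $\mathcal{L}$ and on $Г^{1}\cup Г^{2}$: one needs $\phi_n$ to decay on the arc at the rate above while $|\phi_n(\zeta)|$ remains bounded below by a positive constant for the relevant $\zeta$. Under classical smoothness of $\mathcal{L}$ such two-sided control of Faber polynomials is standard, but under bare quasi-smoothness one must invoke the quasi-conformal distortion theorems of Belyi and Lesley and the associated growth estimates for Faber polynomials on quasi-smooth continua (see \cite[Ch.~III]{Dzyadyk} and \cite{AppOnArcs}). Once $\phi_n$ is in hand, the verification of (2.5) reduces to the routine two-region computation sketched above.
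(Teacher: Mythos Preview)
Your approach is quite different from the paper's and considerably more elaborate. The paper does not build a localizing polynomial at all: it simply quotes the ready-made Dzyadyk kernel $K_{0,m,r,n}(\zeta,z)$ from \cite[Ch.~3]{ConstrTheory}, which already satisfies
\[
\Bigl|\frac{1}{\zeta-z}-K_{0,m,r,n}(\zeta,z)\Bigr|\preccurlyeq\frac{1}{|\zeta-z|}\Bigl|\frac{\widetilde\zeta-\zeta}{\widetilde\zeta-z}\Bigr|^{rm},\qquad \widetilde\zeta:=\Psi\bigl[(1+1/n)\Phi(\zeta)\bigr],
\]
and then uses the conformal distortion facts on $Г^{j}$ (namely $|\widetilde\zeta-z|\succcurlyeq|\zeta-z_0|$ and $|\widetilde\zeta-\zeta|/|\zeta-z_0|\preccurlyeq(\rho^{j}_{1/n}(z_0)/|\zeta-z_0|)^{c}$) to convert the right-hand side into $(\rho^{*}_{1/n}(z_0)/|\zeta-z_0|)^{rmc}/|\zeta-z_0|$. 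Choosing $r,m$ with $rmc\ge k+2$ and setting $K_n:=K_{0,m,r,[\varepsilon n]}$ gives (2.5) in five lines; continuity of the $a_j(\zeta)$ on all of $Г^{1}\cup Г^{2}$ is inherited from the Dzyadyk kernel.

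Your construction, by contrast, stands or falls on the existence of $\phi_n$ with \emph{two-sided} control: the on-arc decay $|\phi_n(z)|\preccurlyeq(\rho^{*}_{1/n}(z_0)/|z-z_0|)^{k+2}$ together with a \emph{uniform lower bound} $|\phi_n(\zeta)|\succcurlyeq 1$ for $\zeta\in Г^{1}\cup Г^{2}$. You rightly flag this as ``the main obstacle'', but you do not resolve it, and the appeal to Belyi--Lesley distortion does not close the gap: those theorems control $\Phi$ and $\Psi$, not the off-arc values of a specific polynomial. A Dzyadyk/Fej\'er-type localizer certainly gives the on-arc decay, yet nothing in its construction forces $|\phi_n(\zeta)|$ to stay bounded away from zero along the radial arcs $Г^{j}$; such polynomials typically oscillate off $\mathcal L$ and may well vanish there. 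Without the lower bound your identity for $1/(\zeta-z)-K_n(\zeta,z)$ yields no estimate, and the very definition of your $K_n$ blows up wherever $\phi_n(\zeta)=0$, so continuity of the $a_j(\zeta)$ on $Г^{1}\cup Г^{2}$ (required by the lemma for \emph{all} $\zeta$, not only those with $|\zeta-z_0|\ge\rho^{*}_{1/n}(z_0)$) also fails. In short, the step you label difficult is the whole proof, and it is exactly what the off-the-shelf Dzyadyk kernel bypasses.
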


\begin{proof}
To show (2.5), we repeat word by word the proof for $k=0$, (\cite[Lemma 5.4]{ConstrTheory}).

Let $n$ be sufficiently large. For fixed $m$ and $r$ we consider the Dzyadyk polynomial kernel $K_{0,m,r,n}(\zeta, z)$ (see, e.g., \cite[Chap. 3]{ConstrTheory}). Then,
for $r\geqslant 5$ and $z \in \mathcal{L}$, $\zeta \in Г^{j}, j=1,2$,
$$
\bigg|\frac{1}{\zeta - z} -  K_{0,m,r,n}(\zeta, z)\bigg| \preccurlyeq \frac{1}{|\zeta-z|} \bigg| \frac{\widetilde{\zeta} - \zeta}{\widetilde{\zeta} - z}\bigg|^{rm}
$$
where $\widetilde{\zeta} := \widetilde{\zeta}^{j}_{1/n}:=\Psi [(1+1/n)\Phi(\zeta)]$. 

Since
$$
\bigg|\frac{\widetilde{\zeta} - \zeta}{\widetilde{\zeta} - z}\bigg| \preccurlyeq  \bigg|\frac{\widetilde{\zeta} - \zeta}{\zeta - z_{0}} \bigg| \preccurlyeq \bigg|\frac{\rho^{j}_{1/n}(z_{0})}{\zeta - z_{0}}\bigg|^{c} \preccurlyeq \bigg|\frac{\rho^{*}_{1/n}(z_{0})}{\zeta - z_{0}} \bigg|^{c},
$$

it is enough to take $r$ and $m$ such that $rmc\geqslant k+2$, and set $K_{n}(\zeta,z):= K_{0,m,r,[\varepsilon n]}(\zeta,z)$, where $\varepsilon = \varepsilon(r,m)>0$ is sufficiently small.  
\end{proof}

The next theorem is also a generalization of the case $k=0$ in (2.2) and the proof essentially repeats the proof of \cite[Theorem 5.2]{ConstrTheory}. 

\begin{theorem}
Let $\mathcal{L}$ be a quasi-smooth arc, and let function $\mathit{f}$ be given by (2.1), (2.2).
Then 
$$
c'\;[\rho_{1/n}^{*}(z_{0})]^{k+1} \leq
E_{n}(\mathit f, \mathcal L) \leq
c''\;[\rho_{1/n}^{*}(z_{0})]^{k+1},                     \eqno(2.6)
$$
where $c', c''$ don't depend on n.
\end{theorem}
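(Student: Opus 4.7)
The plan is to establish the two-sided bound (2.6) by directly constructing a good polynomial approximant for the upper inequality and by a transfer-of-smallness argument across $z_{0}$ for the lower inequality. The key quantitative input from (2.2) is that the jump function $\psi(\zeta):=f_{1}(\zeta)-f_{2}(\zeta)$, analytic in $U$, vanishes to order $k+1$ at $z_{0}$, so $|\psi(\zeta)|\preccurlyeq |\zeta-z_{0}|^{k+1}$ on $Г^{1}\cup Г^{2}$. This is the single fact through which the exponent $k+1$ in (2.6) enters, both in the construction of the approximants and in the rigidity giving the lower bound.

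For the upper bound I would use the decomposition $f=h_{1}+h_{2}$ from (2.4) and, with the kernel $K_{n}$ from Lemma 1, set
\[
P_{n}(z):=\frac{1}{2\pi i}\int\limits_{Г^{1}\cup Г^{2}}\psi(\zeta)\,K_{n}(\zeta,z)\,d\zeta\;+\;Q_{n}(z),
\]
where $Q_{n}$ is a polynomial of degree $\leq n$ approximating $h_{2}$ (which is analytic in a neighborhood of $\mathcal{L}$) with a geometric rate. Then $\deg P_{n}\leq n$, and the error on $\mathcal{L}$ reduces, up to an $e^{-cn}$ remainder, to
\[
\frac{1}{2\pi i}\int\limits_{Г^{1}\cup Г^{2}}\psi(\zeta)\Bigl[\frac{1}{\zeta-z}-K_{n}(\zeta,z)\Bigr]d\zeta.
\]
I would split each $Г^{j}$ at the level $|\zeta-z_{0}|=\rho_{1/n}^{*}(z_{0})$. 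On the outer portion, Lemma 1 gives $|1/(\zeta-z)-K_{n}|\preccurlyeq[\rho_{1/n}^{*}(z_{0})]^{k+2}|\zeta-z_{0}|^{-(k+3)}$ which combines with $|\psi(\zeta)|\preccurlyeq|\zeta-z_{0}|^{k+1}$ to leave an integrand $\preccurlyeq[\rho_{1/n}^{*}(z_{0})]^{k+2}|\zeta-z_{0}|^{-2}$; integrating this along the rectifiable arc out to a fixed distance yields $[\rho_{1/n}^{*}(z_{0})]^{k+1}$. On the inner portion one uses $|\psi(\zeta)|\preccurlyeq[\rho_{1/n}^{*}(z_{0})]^{k+1}$ together with the standard pointwise bounds for the Dzyadyk kernel $K_{n}$ and for $1/(\zeta-z)$, where quasi-smoothness of $\mathcal{L}$ gives the required control on $|\zeta-z|$ for $\zeta\in Г^{j}$, $z\in\mathcal{L}$, producing a contribution of the same order.

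For the lower bound, let $p_{n}^{*}$ be the best approximant with $E:=E_{n}(f,\mathcal{L})$. The functions $F_{1}:=p_{n}^{*}-f_{1}$ and $F_{2}:=p_{n}^{*}-f_{2}$ are analytic in neighborhoods of $\mathcal{L}'$ and $\mathcal{L}''$ respectively, both are analytic in the common disc $U$ around $z_{0}$, and they satisfy $\|F_{1}\|_{\mathcal{L}'}\leq E$, $\|F_{2}\|_{\mathcal{L}''}\leq E$, while $F_{1}-F_{2}=\psi$ has a leading term $c_{k+1}(z-z_{0})^{k+1}$ with $c_{k+1}\neq 0$. Bernstein--Walsh applied to the polynomial $p_{n}^{*}$ gives $|p_{n}^{*}(\zeta)|\preccurlyeq 1$ on the level line $|\Phi(\zeta)|=1+1/n$, hence $|F_{j}|\preccurlyeq 1$ there inside $U$. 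A two-constants/Phragm\'en--Lindel\"of estimate then transfers the $E$-smallness of $F_{1}$ from $\mathcal{L}'\cap U$ to a point $z^{*}\in\mathcal{L}''$ with $|z^{*}-z_{0}|\asymp\rho_{1/n}^{*}(z_{0})$, yielding $|F_{1}(z^{*})|\preccurlyeq E$. Since also $|F_{2}(z^{*})|\leq E$, we have $|\psi(z^{*})|\preccurlyeq E$, and substituting the leading-order expansion of $\psi$ at $z_{0}$ gives the desired $[\rho_{1/n}^{*}(z_{0})]^{k+1}\preccurlyeq E$.

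The principal difficulty, as already in the $k=0$ case of \cite[Theorem 5.2]{ConstrTheory}, lies in carrying out these estimates on the correct geometric scale $\rho_{1/n}^{*}(z_{0})$: in the upper bound the inner-portion integral, where Lemma 1 no longer helps, must be controlled using both the $(k+1)$-fold vanishing of $\psi$ and the distortion properties of $\Phi$ near $z_{0}$ implied by quasi-smoothness; in the lower bound, the transfer of smallness across $z_{0}$ must produce a sharp estimate precisely at points whose distance from $z_{0}$ matches the scale $\rho_{1/n}^{*}(z_{0})$, which again relies on the conformal geometry of the quasi-smooth arc at an interior point.
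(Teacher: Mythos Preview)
Your upper-bound argument is essentially the paper's. The only difference is cosmetic: the paper defines its approximant by integrating $\psi(\zeta)K_{n}(\zeta,z)$ only over the outer portion $\gamma=\{\zeta\in Г^{1}\cup Г^{2}:|\zeta-z_{0}|\ge d_{n}\}$, so that on the inner stub only the bare Cauchy kernel $1/(\zeta-z)$ remains; you integrate $K_{n}$ over all of $Г^{1}\cup Г^{2}$ and must then bound $K_{n}$ separately on the inner stub, where Lemma~1 gives no information. Either variant works, but the paper's choice is cleaner.

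The lower-bound argument has a genuine gap. A two-constants (or Phragm\'en--Lindel\"of) estimate with data $|F_{1}|\le E$ on $\mathcal{L}'$ and $|F_{1}|\preccurlyeq 1$ on the remaining boundary yields only $|F_{1}(z^{*})|\preccurlyeq E^{\omega(z^{*})}$, where $\omega(z^{*})$ is the harmonic measure of $\mathcal{L}'$ at $z^{*}$ in the domain used. For $z^{*}\in\mathcal{L}''$ at distance $\asymp d_{n}$ from $z_{0}$, this harmonic measure is a fixed number strictly less than $1$, uniformly in $n$: in the model case $\mathcal{L}=[-1,1]$, $z_{0}=0$, the region between $\mathcal{L}$ and the level line $\{|\Phi|=1+1/n\}$ rescales to a fixed strip, and the harmonic measure of the half-line corresponding to $\mathcal{L}'$ from a point at unit distance on the $\mathcal{L}''$ side is a fixed constant in $(0,1)$ (indeed at most $1/2$ as $z^{*}\to z_{0}$). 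Hence you obtain at best $d_{n}^{\,k+1}\asymp|\psi(z^{*})|\preccurlyeq E^{\omega}$ with $\omega<1$, i.e.\ $E_{n}\succcurlyeq d_{n}^{(k+1)/\omega}$, which is strictly weaker than (2.6). Transferring smallness across the singular point by harmonic measure always costs a fixed positive exponent; the claim $|F_{1}(z^{*})|\preccurlyeq E$ is simply not available from the stated data.

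The paper's mechanism is different. After subtracting the common Taylor jet $c_{0}+\dots+c_{k}(\zeta-z_{0})^{k}$ from both $f$ and $p_{n}^{*}$, it tests against the kernel $(\zeta-z)^{-(k+2)}$ with $z\in Г^{2}$ at distance $\varepsilon d_{n}$ from $z_{0}$: writing $l_{1}\cup l_{2}$ for the two short sub-arcs of $\mathcal{L}$ near $z_{0}$ and $l_{3}$ for an arc in $\Omega^{1}$ on $\{|\zeta-z_{0}|=d_{n}\}$ joining their far endpoints, Cauchy's theorem gives
\[
\int_{l_{1}\cup l_{2}}\frac{\widetilde{f}(\zeta)}{(\zeta-z)^{k+2}}\,d\zeta
=\int_{l_{1}\cup l_{2}}\frac{\widetilde{f}-\widetilde{p}_{n}^{*}}{(\zeta-z)^{k+2}}\,d\zeta
+\int_{l_{3}}\frac{\widetilde{p}_{n}^{*}}{(\zeta-z)^{k+2}}\,d\zeta.
\]
The exponent $k+2$ is chosen so that the left side contains $(c_{k+1}-\widetilde{c}_{k+1})\log(1/\varepsilon)$ plus bounded terms; the first right-hand integral is $\preccurlyeq E_{n}/(\varepsilon d_{n})^{k+1}$, and the second is bounded via a polynomial estimate for $\widetilde{p}_{n}^{*}$ on $l_{3}$. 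Taking $\varepsilon$ small but fixed forces $E_{n}\succcurlyeq d_{n}^{\,k+1}$. This extraction of the $(k+1)$-st coefficient through a logarithmically divergent integral is the missing ingredient.
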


\begin{proof}
First, we estimate $E_{n}(\mathit f, \mathcal L)$ from above.

Without loss of generality, we can assume $z_{0}=0$ and $n$ is sufficiently large. \\
Let $d_{n}:=\rho_{1/n}^{*}(0)$,
$\gamma=\gamma_{n}:=\{\zeta: \zeta \in Г^{1}\cup Г^{2}, |\zeta|\geq d_{n}\}$,
$$
P_{n}= \frac{1}{2\pi i}  \int_{\gamma} (f_{1}(\zeta)-f_{2}({\zeta})) K_{n}(\zeta, z) d\zeta. 
$$
From (2.2), for all $\zeta$ in some neighborhood $U$ of the point $z_{0} = 0$ 
$$
f_{1}(\zeta)=c_{0}+c_{1}\zeta+...+c_{k}\zeta^{k}+c_{k+1}\zeta^{k+1}+\varphi_{1}(\zeta) \zeta^{k+2}                          \eqno(2.7)
$$
$$
f_{2}(\zeta)=c_{0}+c_{1}\zeta+...+c_{k}\zeta^{k}+\widetilde{c}_{k+1}\zeta^{k+1}+\varphi_{2}(\zeta) \zeta^{k+2},                   \eqno(2.8)
$$
where $c_{k+1}\neq\widetilde{c}_{k+1}$ and $\varphi_{1}(\zeta),\varphi_{2}(\zeta)$ are functions, analytic in $U$.

Hence, there exists a constant $C$ such that
$$
|f_{1}(\zeta)-f_{2}({\zeta})| \leq C|\zeta^{k+1}|,  \qquad  \zeta \in U.        \eqno(2.9)                                      
$$

By (2.5), (2.9), for all $z \in \mathcal L$ 
$$
\bigg|\frac{1}{2\pi i} \int_{Г^{1} \cup Г^{2}} \frac{f_{1}(\zeta)-f_{2}({\zeta})}{\zeta-z} d\zeta - P_{n}(z)\bigg|
$$
$$
\leq \frac{1}{2\pi }  \int_{\gamma} |f_{1}(\zeta)-f_{2}({\zeta})|  \bigg| \frac{1}{\zeta-z} - K_{n}(z,\zeta)\bigg|  |d\zeta| 
+
\frac{1}{2\pi}  \int_{(Г^{1} \cup Г^{2})\backslash\gamma} \bigg| \frac{f_{1}(\zeta)-f_{2}({\zeta})}{\zeta-z}\bigg| |d\zeta| 
$$
$$
\leq \frac{C d_{n}^{k+2}}{2\pi }  \int_{\gamma} \frac{|d\zeta|}{|\zeta|^{2}}  
+
\frac{C}{2\pi}  \int_{(Г^{1} \cup Г^{2})\backslash\gamma}  \frac{|\zeta^{k+1}|}{|\zeta-z|} |d\zeta|.                                                              \eqno(2.10)
$$

Integration by parts of $\int_{\gamma} \frac{|d\zeta|}{|\zeta|^{2}}$ yields
$\int_{\gamma} \frac{|d\zeta|}{|\zeta|^{2}} \preccurlyeq \frac{1}{d_{n}} $.
Since $dist(\zeta,  \mathcal L) \asymp |\zeta|$, (see \cite[Chap. 5]{ConstrTheory}), 
and $|{(Г^{1} \cup Г^{2})\backslash\gamma}|\preccurlyeq d_{n}$, it implies $\int_{(Г^{1} \cup Г^{2})\backslash\gamma}  \frac{|\zeta^{k+1}|}{|\zeta-z|} |d\zeta|  \preccurlyeq d_{n}^{k+1}$. Thus, combining with (2.10), we obtain the estimate from above in (2.6).

Now, we estimate $E_{n}(\mathit f, \mathcal L)$ from below.

Let $p_{n}^{*}$ be the polynomial of the best approximation, that is
$$
|f(z)-p_{n}^{*}(z)| \leq E_{n}(f),  \qquad  z \in \mathcal L     \eqno(2.11)
$$

Without loss of generality we can assume that
$$
E_{n}(f) \leq d_{n} = \rho _{1/n}^{1}(0).
$$

Denote by $l_{3} \subset \Omega^{1}$ any arc of a circle $\{\zeta: |\zeta|=d_{n}\}$, 
separating the prime end $Z_{0}^{1}$ from $\infty$.

Let $ z' \in  \mathcal L'$ and  $z'' \in  \mathcal L''$ be the endpoints of the arc $l_{3}$.
Denote
$$
l_{1}:=\mathcal L(0,z'), \quad
l_{2}:=\mathcal L(0,z'').
$$

Next, take a point $z$ so that $z \in  Г^{2}$, $|z|=\varepsilon d_{n}$ (we'll choose the constant $\varepsilon$ later).
With a corresponding choice of orientation of arcs $l_{j}$, $j=1,2,3$
$$
I:= \int_{l_{1} \cup l_{2}} \frac{\widetilde{f}(\zeta)}{(\zeta-z)^{k+2}} d\zeta =  \int_{l_{1} \cup l_{2}} \frac{\widetilde{f}(\zeta)-\widetilde{p}_{n}^{*}(\zeta)}{(\zeta-z)^{k+2}} d\zeta  +    \int_{l_{3}} \frac{\widetilde{p}_{n}^{*}(\zeta)}{(\zeta-z)^{k+2}} d\zeta,    \eqno(2.12)
$$

where $\widetilde{f}(\zeta) = f(\zeta) - (c_{0}+c_{1}\zeta+...+c_{k}\zeta^{k})$
and $\widetilde{p}_{n}^{*}(\zeta) = p_{n}^{*} - (c_{0}+c_{1}\zeta+...+c_{k}\zeta^{k}) $.
Notice that $f(\zeta) - p_{n}^{*}(\zeta) = \widetilde{f}(\zeta) - \widetilde{p}_{n}^{*}(\zeta)$.

In the following estimates we use notations $a_{i}, \widetilde{a}_{i}, \widetilde{C}, \widehat{C}, C_{i}$ for constants. 

For the left hand side we have
$$
\bigg| \int_{l_{1} \cup l_{2}} \frac{\widetilde{f}(\zeta)}{(\zeta-z)^{k+2}} d\zeta \bigg|
$$

$$
= \bigg| c_{k+1} \int_{l_{1}} \frac{\zeta^{k+1}}{(\zeta-z)^{k+2}} d\zeta + \widetilde{c}_{k+1} \int_{l_{2}} \frac{\zeta^{k+1}}{(\zeta-z)^{k+2}} d\zeta 
$$
$$
+ \int_{l_{1}} \frac{\varphi_{1}(\zeta) \zeta^{k+2}   }{(\zeta-z)^{k+2}} d\zeta
+
\int_{l_{2}} \frac{\varphi_{2}(\zeta) \zeta^{k+2}   }{(\zeta-z)^{k+2}} d\zeta \bigg| 
$$
$$
= \bigg| c_{k+1} \log \frac{z}{z-z'} + \widetilde{c}_{k+1} \log \frac{z-z''}{z} + \frac{a_{1}z'^{k+1}+a_{2}z'^{k}z+ ...+a_{k+1}z'z^{k}}{(z' - z)^{k+1}} 
$$
$$
+\frac{\widetilde{a}_{1}z''^{k+1}+\widetilde{a}_{2}z''^{k}z+ ...+\widetilde{a}_{k+1}z''z^{k}}{(z'' - z)^{k+1}} + \widetilde{C} + \int_{l_{1}} \frac{\varphi_{1}(\zeta) \zeta^{k+2}   }{(\zeta-z)^{k+2}} d\zeta
+
\int_{l_{2}} \frac{\varphi_{2}(\zeta) \zeta^{k+2}   }{(\zeta-z)^{k+2}} d\zeta \bigg| 
$$
$$
\geqslant \bigg| (\widetilde{c}_{k+1} - c_{k+1}) \log \frac{z-z''}{z} + c_{k+1} \log \frac{z-z''}{z-z'}\bigg| - \frac{C_{1} \varepsilon}{(1 - \varepsilon)^{k+1}} - \widehat{C} 
$$
$$
\geqslant  |\widetilde{c}_{k+1} - c_{k+1}|  \log\frac{1-\varepsilon}{\varepsilon} - \frac{C_{1} \varepsilon}{(1 - \varepsilon)^{k+1}} - C_{2} 
$$
Next, we estimate the right hand side of (2.12). By (2.11) and by the choice of $z$ 

$$\bigg| \int_{l_{1} \cup l_{2}} \frac{\widetilde{f}(\zeta)-\widetilde{p}_{n}^{*}(\zeta)}{(\zeta-z)^{k+2}} d\zeta \bigg|  \leq C_{3} \frac{E_{n}}{\varepsilon^{k+1} d_{n}^{k+1}}.
$$ 
To estimate the integral over $l_{3}$ notice that by (2.7) and (2.8)
$$
|\widetilde{f}(\zeta)| \leq c|\zeta^{k+1}|,    \qquad \zeta \in  \mathcal{L}
$$
for some constant $c$. Without loss of generality, we assume $c=1$ (otherwise the arc $l_{3}$ must be considered with a radius $\frac{d_{n}}{c}$ instead).
Since the estimate 
$$
|\widetilde{p}_{n}^{*}(\zeta)| \leq |\widetilde{p}_{n}^{*}(\zeta) - \widetilde{f}(\zeta)| + |\widetilde{f}(\zeta)| \leq d_{n}^{k+1} \bigg(1 + \bigg|\frac{ \zeta}{ d_{n}}\bigg|^{k+1}\bigg),  \quad \zeta \in  \mathcal{L}
$$
holds, \cite[Theorem 6.1]{ConstrTheory} implies
$$
|p_{n}^{*}(\zeta)| \leq C_{4} d_{n}^{k+1} , \qquad  \zeta \in l_{3}.
$$
The last inequality yields
$$\bigg| \int_{l_{3}} \frac{\widetilde{p}_{n}^{*}(\zeta)}{(\zeta-z)^{k+2}} d\zeta \bigg| \leq \frac{2\pi C_{4}}{(1-\varepsilon)^{k+2}}.
$$
Combining the estimates above, for some small but fixed $\varepsilon$ we get
$$
C_{3} \frac{E_{n}}{\varepsilon^{k+1} d_{n}^{k+1}} \geqslant  |\widetilde{c}_{k+1} - c_{k+1}|  \log\frac{1-\varepsilon}{\varepsilon} - \frac{C_{1} \varepsilon}{(1 - \varepsilon)^{k+1}} - C_{2}  -  \frac{2\pi C_{4}}{(1-\varepsilon)^{k+2}} 
$$
$$
\geqslant \frac{|\widetilde{c}_{k+1} - c_{k+1}|}{2}   \log\frac{1-\varepsilon}{\varepsilon}.
$$
Consequently, the estimate from below in (2.6) holds.  

\end{proof}
With reasoning completely similar, we obtain the following.

\begin{theorem}
	Let $\mathcal{L}$ be a quasi-smooth arc, and let function $\mathit{f}$ be given by (1.3), (1.4).
	Then 
	$$
	c'\;[\rho_{1/n}^{*}(z_{0})]^{k+1} \leq
	E_{n}(\mathit f, \mathcal L) \leq
	c''\;[\rho_{1/n}^{*}(z_{0})]^{k+1},                   
	$$
	where $k := \underset{i=\overline{2,m-1}}{min} \{k_{i}\}$ and $c', c''$ don't depend on n.
\end{theorem}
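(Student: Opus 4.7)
The plan is to reduce Theorem 3 to Theorem 2 by localizing around each singular point. For each $z_i$, $i = 2, \ldots, m-1$, I would fix a small circular neighborhood $U_i$ containing only $z_i$ (no other singularity), in which both adjacent pieces $f_{i-1}, f_i$ extend analytically, and inside $U_i$ introduce two contours $\Gamma_i^1, \Gamma_i^2$ associated to the two prime ends at $z_i$, exactly as in the construction (2.3). By taking the $U_i$ pairwise disjoint, the contours around different singularities can be manipulated independently.

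For the upper bound, I would apply the Cauchy formula locally in each $U_i$ to write
\[
f(z) = \sum_{i=2}^{m-1} h_{1,i}(z) + h_2(z), \qquad z \in \mathcal L \setminus \{z_2, \ldots, z_{m-1}\},
\]
where
\[
h_{1,i}(z) = \frac{1}{2\pi i}\int_{\Gamma_i^1 \cup \Gamma_i^2} \frac{f_{i-1}(\zeta) - f_i(\zeta)}{\zeta - z}\, d\zeta
\]
absorbs the singular behavior of $f$ at $z_i$, and $h_2$ extends analytically to all of $\mathcal L$. Condition (1.4) forces $f_{i-1} - f_i$ to vanish to order $k_i + 1$ at $z_i$, so the kernel of Lemma 1 combined with the upper-bound argument from the proof of Theorem 2 applied to each $h_{1,i}$ separately produces polynomials $P_{n,i}$ with $\|h_{1,i} - P_{n,i}\|_{\mathcal L} \preccurlyeq [\rho_{1/n}^{*}(z_i)]^{k_i+1}$. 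Adding a geometric-rate approximant of $h_2$ yields $E_n(f, \mathcal L) \preccurlyeq \max_i [\rho_{1/n}^{*}(z_i)]^{k_i+1}$, which is of the claimed order since the minimum $k$ determines the slowest-decaying term.

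For the lower bound, I would pick an index $i_0$ with $k_{i_0} = k$ and set $z_0 := z_{i_0}$. Shrinking $U_{i_0}$ so that it contains exactly one singularity of $f$, the local expansions (2.7)--(2.8) of $f_{i_0-1}, f_{i_0}$ at $z_0$ hold with exponent $k$. I would then choose the arcs $l_1, l_2, l_3$ of the proof of Theorem 2 entirely inside $U_{i_0}$, so that the integral identity (2.12) sees only the singularity at $z_0$. Repeating the chain of estimates from Theorem 2 verbatim yields $E_n(f, \mathcal L) \succcurlyeq [\rho_{1/n}^{*}(z_0)]^{k+1}$. I expect the only real obstacle to be bookkeeping: checking that the local Cauchy decompositions at distinct $z_i$ patch together into a globally analytic $h_2$, and that the auxiliary bound on $p_n^*$ along $l_3$ from \cite[Theorem 6.1]{ConstrTheory} applies uniformly in the multi-singular setting. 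No essentially new ideas beyond those of Theorem 2 should be required.
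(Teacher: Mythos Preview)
Your proposal is correct and is precisely the approach the paper has in mind: the paper gives no separate proof but simply states that ``with reasoning completely similar'' to the single-singularity theorem one obtains the multi-singularity version, and your localization---decomposing $f$ via Cauchy integrals over disjoint contours $\Gamma_i^1\cup\Gamma_i^2$ around each $z_i$ into $\sum_i h_{1,i}+h_2$ for the upper bound, and running the integral identity (2.12) inside a single neighborhood $U_{i_0}$ with $k_{i_0}=k$ for the lower bound---is exactly how that similarity is cashed out (the same decomposition is in fact spelled out at the start of Section~3). The bookkeeping concerns you flag are the only real work, and they are routine.
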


\section{ Proof of Theorem 1}

As it was mentioned above, 
$f$ can be represented as
$$
f(z) = \sum_{j=2}^{m-1} (h^{j}_{1}(z) + h^{j}_{2}(z)),
$$
where $h^{j}_{2}(z)$ are analytic functions on $\mathcal{L}$
and 
$$
h^{j}_{1}(z) =\frac{1}{2\pi i} 
\int\limits_{Г_{j}^{1} \cup Г_{j}^{2}} \frac{f_{j-1}(\zeta)-f_{j}(\zeta)}{\zeta-z} d\zeta,
$$
with $Г_{j}^{1}, Г_{j}^{2}$ being the arcs given by (2.3), that correspond to the point $z_{j}$.
Therefore, it's enough to construct polynomial approximants for $h^{j}_{1}(z)$ only.

To approximate the integral over  $Г_{j}^{i}$, $i=1,2$, consider a function $F^{i}_{j}: \mathcal{L} \cup Г_{j}^{i} \longrightarrow \mathcal{L}_{\varphi, j}^{i} $,
such that $F^{i}_{j}$ is one-to-one and satisfies 
$$
|F^{i}_{j}(z)-F^{i}_{j}(\zeta)| \leqslant c |z-\zeta|,    \quad z, \; \zeta \in \mathcal{L} \cup Г_{j}^{i},
$$
$$
F^{i}_{j}(z_{j}) = 0,
$$
$$
F^{i}_{j}(\mathcal{L}(z_{1}, z_{j})) = \mathcal{L'}, 
$$
$$
F^{i}_{j}(\mathcal{L}(z_{j}, z_{m}))= \mathcal{L''},
$$
$$
F^{i}_{j}(Г_{j}^{i})=\widetilde{Г},
$$
where $z_{1}$, $z_{m}$ are endpoints of $\mathcal{L}$, $\mathcal{L'}$ is a line segment in $[0,\infty)$, $\mathcal{L''}$ is a line segment in the upper half plane that form an anle $\varphi>0$ with $\mathcal{L'}$, (this angle will be determined below), and $\widetilde{Г}$ -- a line segment at an angle $\frac{\varphi}{2}$ to the $\mathcal{L'}$.

Such a mapping $F^{i}_{j} \in Lip_{1}[\mathcal{L} \cup Г_{j}^{i}]$ always exists, 
and to see this it is enough to note that $\mathcal{L}$ and $Г_{j}^{i}$ are quasi-smooth and 
 $$dist(\zeta, \mathcal{L}) \asymp |\zeta - z_{j}|$$
 holds for all $\zeta \in Г_{j}^{i}$ (see \cite[Chap. 5]{ConstrTheory}).
 
\begin{figure}[h!]
	\center{\includegraphics[scale=0.5]{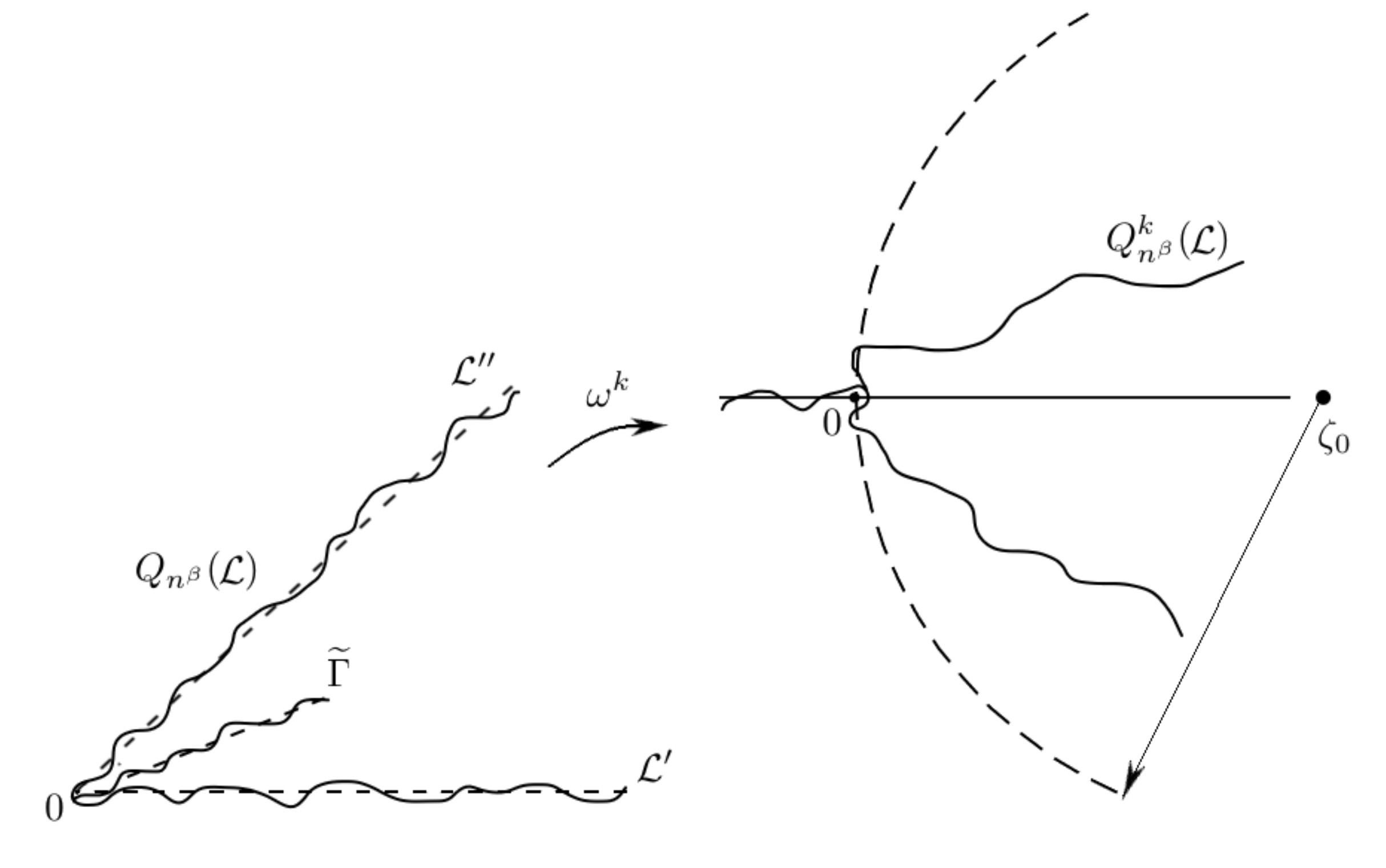}}
\end{figure}

By \cite[Theorem 4]{AppOnCont} the function $F^{i}_{j}$ can be approximated by polynomials $Q_{n}(z) := Q^{i}_{n,j}(z)$ with the rate $\frac{1}{n^{\alpha}}$, for some $\alpha >0$, that is
$$
|F^{i}_{j}(z) - Q_{n}(z)| \leqslant \frac{C}{n^{\alpha}},  \quad  z \in \mathcal{L} \cup Г_{j}^{i},                                                                  \eqno(3.1)
$$
where constant $C$ does not depend on $z$ and $n$.

For fixed $0<\sigma< 1$ take an integer $k\geqslant2 $, such that $1 - \sigma > \frac{1}{1+ k \alpha}$.
Now, for $\varphi = \frac{2\pi}{k}$ consider corresponding mapping $F^{i}_{j}$ and approximating polynomials $Q_{n}$. 

Let
$$
\widehat{P}^{i}_{n,j}(z,\zeta)=\frac{1-\left(\frac{Q^{k}_{[n^{\beta}]} (z)-\zeta_{0}}{Q^{k}_{[n^{\beta}]}(\zeta)-\zeta_{0}}\right)^{\big[\frac{n^{1-\beta}}{2k}\big]}}{\zeta-z}
+\left(\frac{Q^{k}_{[n^{\beta}]}(z)-\zeta_{0}}{Q^{k}_{[n^{\beta}]}(\zeta)-\zeta_{0}}\right)^{\big[\frac{n^{1-\beta}}{2k}\big]} \; K_{[\frac{n}{2}]}(z,\zeta).
$$
It is not hard to see that $\widehat{P}^{i}_{n,j}(z,\zeta)$ is a polynomial in $z$ of degree at most $n$. The idea of constructing such a polynomial is motivated by \cite[p. 380]{Varga}. 

We will show that for some choice of $\beta$ and $\zeta_{0}$ the term $\bigg|\frac{Q^{k}_{[n^{\beta}]} (z)-\zeta_{0}}{Q^{k}_{[n^{\beta}]}(\zeta)-\zeta_{0}}\bigg|^{\big[\frac{n^{1-\beta}}{2k}\big]}$ is bounded uniformly on $\mathcal{L}$ by a constant that does not depend on $n$, and at points of analyticity of $f$ it can be bounded by $q^{\big[\frac{n^{1-\beta}}{2k}\big]}$, for some $q<1$.

For $n$ sufficiently large the arc $\mathcal{L}$ can be written as a disjoint union
$$
\mathcal{L} = A_{1} \cup A_{2} \cup A_{3},
$$
where
$$
A_{1}:= \{z \in \mathcal{L}: |Q_{n}(z)| < \frac{C}{n^{\alpha} sin \frac{\pi}{4k}}\},       \eqno(3.2)
$$
$$
A_{2}:=  \{z \in \mathcal{L}: |Q_{n}(z)| \geqslant \frac{C}{n^{\alpha} sin \frac{\pi}{4k}}, \: dist(Q_{n}(z), \mathcal{L'}) \leqslant  \frac{C}{n^{\alpha}} \},                        \eqno(3.3)
$$
$$
A_{3}:= \{z \in \mathcal{L}: |Q_{n}(z)| \geqslant \frac{C}{n^{\alpha} sin \frac{\pi}{4k}}, \: dist(Q_{n}(z), \mathcal{L''}) \leqslant \frac{C}{n^{\alpha}}\},         \eqno(3.4)
$$
where $C$ is the constant from (3.1).\\
Points of $A_{1}$ satisfy
$$
|Q^{k}_{n}(z)|<  \frac{C^k}{n^{\alpha k} (sin \frac{\pi}{4k})^k}.   \eqno(3.5)
$$
For $A_{2}$ we have
$$
|sin(arg\: Q_{n}(z))| \leqslant \frac{C}{n^{\alpha} |Q_{n}(z)|} \leqslant sin \frac{\pi}{4k},
$$
that implies
$$
- \frac{\pi}{4} \leqslant arg\: Q^{k}_{n}(z) \leqslant \frac{\pi}{4}.    \eqno(3.6)
$$
Similarly, for $A_{3}$
$$
|sin(\frac{2\pi}{k} - arg\: Q_{n}(z))| \leqslant \frac{C}{n^{\alpha} |Q_{n}(z)|} \leqslant sin \frac{\pi}{4k},
$$
that yields
$$
- \frac{\pi}{4} \leqslant arg\: Q^{k}_{n}(z) \leqslant \frac{\pi}{4}.    \eqno(3.7)
$$
 $Г^{i}_{j}$ can also be written as a disjoint union
$$
Г^{i}_{j} = B_{1} \cup B_{2},
$$
where
$$
B_{1}:= \{\zeta \in Г^{i}_{j}: |Q_{n}(\zeta)| < \frac{C}{n^{\alpha} sin \frac{\pi}{4k}}\},  \eqno(3.8)
$$
$$
B_{2}:= \{\zeta \in Г^{i}_{j}: |Q_{n}(\zeta)| \geqslant \frac{C}{n^{\alpha} sin \frac{\pi}{4k}}\}.    \eqno(3.9)
$$ 
Points of $B_{1}$ satisfy
$$
|Q^{k}_{n}(\zeta)|<  \frac{C^k}{n^{\alpha k} (sin \frac{\pi}{4k})^k}.   \eqno(3.10)
$$
For $B_{2}$ we have
$$
|sin(\frac{\pi}{k} - arg\: Q_{n}(\zeta))| \leqslant \frac{C}{n^{\alpha} |Q_{n}(\zeta)|} \leqslant sin \frac{\pi}{4k},    
$$

$$
\pi - \frac{\pi}{4} \leqslant arg\: Q^{k}_{n}(\zeta) \leqslant \pi + \frac{\pi}{4}.
\eqno(3.11)
$$

Now, if we choose $\zeta_{0}$ to be a point in $(0, \infty)$ with $\zeta_{0}>max\{|\mathcal{L'}|^{k}, |\mathcal{L''}|^{k}\}$, then (3.5), (3.6) and (3.7) imply
$|Q^{k}_{n}(z) - \zeta_{0}| \leqslant \zeta_{0} +  \frac{C^k}{n^{\alpha k} (sin \frac{\pi}{4k})^k}$, $z \in \mathcal{L}$.
Also, by (3.10) and (3.11) the estimate $|Q^{k}_{n}(\zeta) - \zeta_{0}| \geqslant \zeta_{0} -  \frac{C^k}{n^{\alpha k} (sin \frac{\pi}{4k})^k}$ holds for $\zeta \in Г^{i}_{j}$.   

According to these observations, we have
$$
\bigg|\frac{Q^{k}_{[n^{\beta}]} (z)-\zeta_{0}}{Q^{k}_{[n^{\beta}]}(\zeta)-\zeta_{0}}\bigg|^{\big[\frac{n^{1-\beta}}{2k}\big]} \leqslant  \bigg(1 + \frac{\widetilde{C}}{n^{\alpha \beta k}}\bigg) ^{\big[\frac{n^{1-\beta}}{2k}\big]}     \eqno(3.12)
$$
where $\widetilde{C}= \frac{2C^{k}}{\zeta_{0}  n^{\alpha \beta k}(sin \frac{\pi}{4k})^k - C^{k}} \leqslant C^{k}$ for $n$ large enough.

Let $\beta$ be such that $1 - \sigma >  \beta > \frac{1}{1+ k \alpha}$, so that $1 - \beta < \alpha \beta k$ and $\sigma < 1 - \beta $.
From (3.12) it follows

$$
\bigg|\frac{Q^{k}_{[n^{\beta}]} (z)-\zeta_{0}}{Q^{k}_{[n^{\beta}]}(\zeta)-\zeta_{0}}\bigg|^{\big[\frac{n^{1-\beta}}{2k}\big]}  \leqslant e^{ \widetilde{c} n^{- \alpha \beta k} n^{1 - \beta}} \leqslant \widehat{C},     \eqno(3.13)
$$
where $\widehat{C} $ does not depend on $n$.

Also, for all points $z$ of a compact set $E \subset \mathcal{L} \backslash \{z_{1}, z_{2},...,z_{m}\}$ and $n$ sufficiently large the estimate 
$$
\bigg|\frac{Q^{k}_{[n^{\beta}]} (z)-\zeta_{0}}{Q^{k}_{[n^{\beta}]}(\zeta)-\zeta_{0}}\bigg|^{\big[\frac{n^{1-\beta}}{2k}\big]} \leqslant q^{\big[\frac{n^{1-\beta}}{2k}\big]},    \eqno(3.14)
$$
holds with some $q = q(E)<1$.

Therefore, if we denote

$d_{n}:=\rho_{1/n}^{*}(z_{j})$,
$\gamma=\gamma_{n}:=\{\zeta: \zeta \in Г^{i}_{j}, |\zeta-z_{j}|\geq d_{n}\}$  

and consider polynomial
$$
P^{i}_{n,j}(z)= \frac{1}{2\pi i}  \int_{\gamma} (f_{j-1}(\zeta)-f_{j}(\zeta))  \widehat{P}^{i}_{n,j}(z,\zeta) d\zeta 
$$
$$ + \frac{1}{2\pi i} \int_{Г^{i}_{j}\backslash\gamma}  (f_{j-1}(\zeta)-f_{j}(\zeta))   \frac{\left(1-\left(\frac{Q^{k}_{[n^{\beta}]}(z)-\zeta_{0}}{Q^{k}_{[n^{\beta}]}(\zeta)-\zeta_{0}}\right)^{\big[\frac{n^{1-\beta}}{2k}\big]}\right) }{\zeta-z} d\zeta,
$$
by (3.13) and Theorem 4, for all $z\in \mathcal{L}$ we get

$$
\bigg| \frac{1}{2\pi i} 
\int\limits_{Г_{j}^{i}} \frac{f_{j-1}(\zeta)-f_{j}(\zeta)}{\zeta-z} d\zeta - P^{i}_{n,j}(z) \bigg|               
$$
$$
\leqslant \frac{1}{2\pi }  \int_{\gamma} |f_{j-1}(\zeta)-f_{j}({\zeta})| \bigg| \frac{Q^{k}_{[n^{\beta}]}(z)-\zeta_{0}}{Q^{k}_{[n^{\beta}]}(\zeta)-\zeta_{0}} \bigg|^{\big[\frac{n^{1-\beta}}{2k}\big]}  \bigg| \frac{1}{\zeta-z} - K_{[\frac{n}{2}]}(z,\zeta)\bigg|  |d\zeta|
$$
$$
+ \frac{1}{2\pi}  \int_{Г^{i}_{j}\backslash\gamma} \bigg| \frac{f_{j-1}(\zeta)-f_{j}({\zeta})}{\zeta-z}\bigg|  \bigg| \frac{Q^{k}_{[n^{\beta}]}(z)-\zeta_{0}}{Q^{k}_{[n^{\beta}]}(\zeta)-\zeta_{0}} \bigg|^{\big[\frac{n^{1-\beta}}{2k}\big]}    |d\zeta| 
$$
$$
\preccurlyeq d_{n}^{k_{j}+2}  \int_{\gamma} \frac{|d\zeta|}{|\zeta|^{2}}    
+  \int_{Г^{i}_{j}\backslash\gamma} \bigg| \frac{\zeta^{k_{j}+1}}{\zeta-z}\bigg|      |d\zeta| \preccurlyeq
 E_{n}(f, \mathcal{L}),    \eqno(3.15)
$$
where the last inequality follows by the reasoning, similar to the one we use in (2.10).
 
If $z \in E$, by (3.14), (2.10) and Theorem 4 we have
$$
\bigg| \frac{1}{2\pi i} 
\int\limits_{Г_{j}^{i}} \frac{f_{j-1}(\zeta)-f_{j}(\zeta)}{\zeta-z} d\zeta - P^{i}_{n,j}(z) \bigg| 
$$
$$
\leq \frac{q^{\big[\frac{n^{1-\beta}}{2k}\big]} }{2\pi }  \int_{\gamma} |f_{j-1}(\zeta)-f_{j}({\zeta})| \bigg| \frac{1}{\zeta-z} - K_{[\frac{n}{2}]}(z,\zeta)\bigg|  |d\zeta| 
$$
$$
+ \frac{q^{\big[\frac{n^{1-\beta}}{2k}\big]} }{2\pi}  \int_{Г^{i}_{j} \backslash\gamma} \bigg| \frac{f_{j-1}(\zeta)-f_{j}({\zeta})}{\zeta-z}\bigg| |d\zeta|  
$$
$$
\preccurlyeq E_{n}(f, \mathcal{L}) q^{\big[\frac{n^{1-\beta}}{2k}\big]}   \preccurlyeq E_{n}(f, \mathcal{L}) e^{-\widetilde{c} n ^{1-\beta}}.                     \eqno(3.16)
$$

Let $P_{n}(z)=\Sigma_{j=2}^{m-1}(P^{1}_{n,j}(z)+P^{2}_{n,j}(z))$.

By (3.15), (3.16), polynomials $\{P_{n}\}$ are "near best"  polynomials, approximating  $\sum_{j=2}^{m-1} h^{j}_{1}(z)$ and satisfying (1.5).    $\quad \qedsymbol$

\section{ Proof of Theorem 2}

  Since changing the $R$ corresponds to scaling the lemniscate, we can always scale the picture and without loss of generality assume for simplicity $R=1$.

As it was shown above, it's enough to approximate the function
$$
h_{1}(z)=\frac{1}{2\pi i} 
\int\limits_{Г^{1} \cup Г^{2}} \frac{f_{1}(\zeta)-f_{2}(\zeta)}{\zeta-z} d\zeta. 
$$
Here $Г^{1}$ and $Г^{2}$ we choose in such a way that
$|P(\zeta)|>1$ for all  $ \zeta \in (Г^{1} \cup Г^{2}) \setminus \{0\}$.
While the image of  $Г^{1} \cup Г^{2}$ under the mapping $P$  belongs to the complement of the unit disc, the image of $\mathcal{L}$ is inside the disc, that yields
$$
\bigg|\frac{P(z)}{P(\zeta)}\bigg| \leqslant 1,  \qquad   z \in \mathcal{L}, \quad \zeta \in Г^{1} \cup Г^{2}      \eqno(4.1)
$$
Moreover, due to geometry of $\mathcal{L}$ the equality in (4.1) occurs only if $\zeta = z = 0$.

Let 
$$
\widehat{P}_{n}(z,\zeta)=\frac{1-\left(\frac{P(z)}{P(\zeta)}\right)^{[\frac{n}{2N}]}}{\zeta-z}
+\left(\frac{P(z)}{P(\zeta)}\right)^{[\frac{n}{2N}]}  K_{[\frac{n}{2}]}(z,\zeta).      \eqno(4.2)
$$

One may check that $\widehat{P}_{n}(z,\zeta)$ is a polynomial in $z$ of degree at most $n$. 

Let
$d_{n}:=\rho_{1/n}^{*}(0)$,
$\gamma=\gamma_{n}:=\{\zeta: \zeta \in Г^{1} \cup Г^{2}, |\zeta|\geq d_{n}\}$,
and 
consider
$$
P_{n}(z)= \frac{1}{2\pi i}  \int_{\gamma} (f_{1}(\zeta)-f_{2}(\zeta))  \widehat{P}_{n}(z,\zeta) d\zeta 
$$
$$
+  \frac{1}{2\pi i} \int_{(Г^{1} \cup Г^{2})\backslash\gamma}  (f_{1}(\zeta)-f_{2}(\zeta))  \left(\frac{1-\left(\frac{P(z)}{P(\zeta)}\right)^{[\frac{n}{2N}]}}{\zeta-z}\right) d\zeta.
$$

By virtue of Theorem 3, estimates (2.10) and (4.1), for all $z\in \mathcal{L}$ 
$$
\bigg|\frac{1}{2\pi i} \int_{Г^{1} \cup Г^{2}} \frac{f_{1}(\zeta)-f_{2}({\zeta})}{\zeta-z} d\zeta - P_{n}(z)\bigg|
$$
$$
\leq \frac{1}{2\pi }  \int_{\gamma} |f_{1}(\zeta)-f_{2}({\zeta})| \bigg| \frac{P(z)}{P(\zeta)} \bigg|^{[\frac{n}{2N}]}  \bigg| \frac{1}{\zeta-z} - K_{[\frac{n}{2}]}(z,\zeta)\bigg|  |d\zeta| 
$$
$$
+ \frac{1}{2\pi}  \int_{(Г^{1} \cup Г^{2})\backslash\gamma} \bigg| \frac{f_{1}(\zeta)-f_{2}({\zeta})}{\zeta-z}\bigg|  \bigg| \frac{P(z)}{P(\zeta)} \bigg|^{[\frac{n}{2N}]}  |d\zeta| 
$$
$$
\leq \frac{1}{2\pi }  \int_{\gamma} |f_{1}(\zeta)-f_{2}({\zeta})| \bigg| \frac{1}{\zeta-z} - K_{[\frac{n}{2}]}(z,\zeta)\bigg|  |d\zeta| 
+ \frac{1}{2\pi}  \int_{(Г^{1} \cup Г^{2})\backslash\gamma} \bigg| \frac{f_{1}(\zeta)-f_{2}({\zeta})}{\zeta-z}\bigg| |d\zeta| 
$$
$$
\preccurlyeq d_{n}^{k+2}  \int_{\gamma} \frac{|d\zeta|}{|\zeta|^{2}}    
+  \int_{(Г^{1} \cup Г^{2})\backslash\gamma} \bigg| \frac{\zeta^{k+1}}{\zeta-z}\bigg|      |d\zeta| 
$$
$$
\preceq [\rho^{*}_{1/n}(0)]^{k+1} \preccurlyeq  E_{n}(f, \mathcal{L}).
$$
If $E$ is a compact set in $\mathcal{L}\setminus \{z_{1},0,z_{2}\}$, then for all $ z \in  E$
$$
|P(z)|<q,     \eqno(4.3)
$$
for some $q=q(E)<1$.

Let
$$
d(E):=\underset{z\in E}{min}\{1-|P(z)|\}.
$$
By (4.3), $d(E)>0$ for any compact set $E \subset \mathcal{L}\setminus \{z_{1},0,z_{2}\}$.

Therefore, for all $z \in E$ 
$$
\bigg|\frac{P(z)}{P(\zeta)}\bigg|^{[\frac{n}{2N}]} \leq |P(z)|^{[\frac{n}{2N}]} \leq |1 - d(E)|^{[\frac{n}{2N}]} \leq e^{-c n d(E)},
$$
where the constant $c>0$ does not depend on $n$ and $E$. 

Hence, for $z \in E$
$$
\bigg|\frac{1}{2\pi i} \int_{Г^{1} \cup Г^{2}} \frac{f_{1}(\zeta)-f_{2}({\zeta})}{\zeta-z} d\zeta - P_{n}(z)\bigg|
$$
$$
\leq \frac{e^{-c n d(E)}}{2\pi }  \int_{\gamma} |f_{1}(\zeta)-f_{2}({\zeta})| \bigg| \frac{1}{\zeta-z} - K_{[\frac{n}{2}]}(z,\zeta)\bigg|  |d\zeta| 
$$
$$
+ \frac{e^{-c n d(E)}}{2\pi}  \int_{(Г^{1} \cup Г^{2})\backslash\gamma} \bigg| \frac{f_{1}(\zeta)-f_{2}({\zeta})}{\zeta-z}\bigg| |d\zeta| 
$$
$$
\preccurlyeq [\rho^{*}_{1/n}(0)]^{k+1}e^{-c n d(E)}  \preccurlyeq   E_{n}(f, \mathcal{L}) e^{-c n d(E)}.   \quad \qedsymbol
$$
 
$$
$$

\textbf{Acknowledgment}

The author would like to warmly thank Vladimir Andrievskii for guidance and many useful discussions.


\begin{thebibliography}{10}
\bibitem{Alibekov}	
 G.A.Alibekov, V.K.Dzyadyk,
\textit{Uniform approximation of functions of a complex variable on closed sets with corners}, Mat. Sb. (N.S.), 75(117):4 (1968), 502–557; Math. USSR-Sb., 4:4 (1968), 463–517

\bibitem{AppOnCont}
V.V.Andrievskii,
\textit{Approximation characterization of classes of functions on continua of the complex plane}, Math. USSR-Sb., 53:1 (1986), 69–87.

\bibitem{AppOnArcs}
V.V.Andrievskii, 
\textit{Direct theorems of approximation theory on quasiconformal arcs}, Math. USSR-Izv., 16:2 (1981), 221–238


\bibitem{ConstrTheory}
V.V.Andrievskii, V.I.Belyi, V.K.Dzjadyk, 
\textit{Conformal Invariants in Constructive Theory of Functions of Complex Variable}, World Federation Publisher, Atlanta, GA, 1995.

\bibitem{Varga}
 V.V.Andrievskii, I.E.Pritsker, R.S.Varga, 
 \textit{Simultaneous approximation and interpolation of functions on continua in the complex plane}, J.Math.Pures Appl. 80, 4 (2001) 373-388.



\bibitem{Blatt}
H.-P.Blatt, E.B.Saff, 
\textit{Behavior of zeros of polynomials of near best approximation}, J. Approx. Theory 46 No. 4 (1986). 323-344.

\bibitem{Dzyadyk}
 V.K. Dzyadyk, 
 \textit{On the theory of the approximation of functions on closed sets of the complex plane (apropos of a certain problem of S.M. Nikol'skii)},
  Theory of functions and its applications, A collection of articles dedicated to Academician Sergei Mihailovic Nikol'skii on the occasion of his
   seventieth birthday, Trudy Mat. Inst. Steklov., 134, 1975, 63–114; Proc. Steklov Inst. Math., 134 (1977), 75–130
 
\bibitem{Kroo}
A.Kroo, E.B.Saff, 
\textit{The density of extreme points in complex polynomial approximation}, Proc. Amer. Math. Soc.,  Vol.103, No.1 (May, 1988), pp.203-209.

\bibitem{Maimeskul}
V.V.Maimeskul, 
\textit{Degree of approximation of analytic functions by nearly
best polynomial approximants}, Constr. Approx. 11 (1995), 1–21.

\bibitem{Saff}
 E.B.Saff, 
 \textit{A principle of contamination in best polynomial approximation}, Approx. and Optim., Lecture Notes in Math., Vol. 1354, Springer-Verlag, Heidelberg, (1988), pp. 79-97.

\bibitem{BehaviorBestUnApp}
 E.B.Saff, , V.Totik,  
 \textit{Behavior of Polynomials of Best Uniform Approximation}, Transactions of the American Mathematical Society, vol. 316, no. 2, 1989, pp. 567–593.

\bibitem{AppPieceFunct}
 E.B.Saff, V.Totik, 
 \textit{Polynomial approximation of piecewise analytic functions}, J. London Math. Soc. (2) 39 (1989), no. 3, 487–498. 

\bibitem{ShirokovDocl}
 N.A.Shirokov, 
\textit{On uniform approximation of functions on closed sets having a finite number of corner points with nonzero exterior angles}, Dokl. Akad. Nauk SSSR, 205:4 (1972), 798–800

\bibitem{AppOnTheBoundAndInside}
 N.A.Shirokov, V.Totik, 
 \textit{Polynomial Approximation on the Boundary and Strictly Inside}, Constr. Approx. (1994) 10: 145-152.
\end{thebibliography}
\end{document}